\documentclass[10pt]{amsart}

\usepackage{url,amssymb,enumerate,colonequals}

\usepackage{mathrsfs} 
\usepackage[section]{placeins}
\usepackage{MnSymbol}
\usepackage{extarrows}
\usepackage{lscape}
\usepackage[all,cmtip]{xy}
\usepackage{tikz-cd}

\setlength{\parindent}{12pt}

\usepackage[OT2,T1]{fontenc}
\DeclareSymbolFont{cyrletters}{OT2}{wncyr}{m}{n}
\DeclareMathSymbol{\Sha}{\mathalpha}{cyrletters}{"58}

\usepackage[OT2,T1]{fontenc}
\usepackage{color}

\raggedbottom 

\usepackage[
        colorlinks, citecolor=darkgreen,
        backref,
        pdfauthor={Katerina Santicola},
]{hyperref}
\usepackage[backrefs,lite]{amsrefs} 

\usepackage{comment}
\usepackage{multirow}

\newcommand{\PP}{\mathbb{P}}
\newcommand{\Q}{\mathbb{Q}}

\newcommand{\Z}{\mathbb{Z}}

\newcommand{\cB}{\mathscr{B}}

\newcommand{\GL}{\operatorname{GL}}

\newtheorem{thm}{Theorem}

\newtheorem{lem}[thm]{Lemma}
\newtheorem{conj}[thm]{Conjecture}

\newtheorem{prop}[thm]{Proposition}

\theoremstyle{definition}

\newtheorem{defn}[equation]{Definition}

\theoremstyle{remark}
\newtheorem{remark}[thm]{Remark}

\definecolor{darkgreen}{rgb}{0,0.5,0}

\DeclareRobustCommand{\SkipTocEntry}[5]{}

\begin{document}

\title[]{
	Curves with prescribed rational points
}

\begin{abstract}
Given a smooth curve $C/\mathbb{Q}$ with genus $\geq 2$, we know by Faltings'
Theorem that $C(\Q)$ is finite. 
Here we ask the reverse
question: given a finite set of rational points $S\subseteq \PP^n(\Q)$, does
there exist a smooth curve $C/\Q$ contained in $\PP^n$
such that $C(\Q)=S$? We answer this question in the
affirmative by providing an effective algorithm for constructing such a curve.
\end{abstract}

\author{Katerina Santicola}

\address{Mathematics Institute\\
    University of Warwick\\
    CV4 7AL \\
    United Kingdom}

\email{Katerina.Santicola@warwick.ac.uk}

\date{\today}
\thanks{The author's research is funded by
a doctoral studentship from the Heilbronn Institute
for Mathematical Research}
\keywords{Diophantine equations, rational points}
\subjclass[2010]{Primary 11D41, 11G30 Secondary 11D61, 11G05}

\maketitle

\section{Introduction}

The genus $g$ of a smooth projective curve $C/\Q$ determines the possibilities for
its set of rational points. If $g=0$, then $C(\Q)=\emptyset$ or $C(\Q)$ 
is infinite.
If $g=1$, then $C(\Q)=\emptyset$, or by the Mordell--Weil theorem,
the set $C(\Q)$ can be given the structure of a finitely generated abelian group.
Mazur's torsion theorem \cite{mazur}
restricts the possible torsion subgroups of elliptic curves. As a consequence,
for example, there are no genus $1$ curves $C/\Q$ with exactly $11$ rational
points.
For $g \ge 2$, Faltings' theorem \cite{faltings} asserts that $C(\Q)$ is finite. The Uniformity conjecture of Caporaso, Harris, and Mazur \cite{uniformity}, predicts that $\# C(\Q)$ is
uniformly bounded as $C$ ranges through curves $C/\Q$ of fixed genus $g \ge 2$.

It is therefore natural to ask about the possible restrictions on $\# C(\Q)$
as $C$ ranges through all curves $C/\Q$ of all possible genera $g \ge 2$. Poonen \cite{poonen} has shown that every value of $\#C(\Q)$ is possible\footnote{In fact, that  every value of $\#C(k)$ is possible for any global field $k$.}. In this paper, we show that there are no restrictions on the set $C(\Q)$
itself, beyond finiteness.
\begin{thm}\label{thm:main}
Let $n \ge 2$. 
    Given any finite set of rational points $S\subset \PP^n(\Q)$, there exists a smooth projective curve $C/\Q$ contained
in $\PP^n$, of genus $g \ge 2$, such that $C(\Q)=S$. 
\end{thm}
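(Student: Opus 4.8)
The plan is to trap $C(\Q)$ inside one fibre of a morphism to an elliptic curve with no non-trivial rational points. Fix an elliptic curve $E/\Q$ with $E(\Q)=\{O\}$ (equivalently, of Mordell--Weil rank $0$ with trivial torsion); such curves are known explicitly and can be found effectively by testing small-conductor curves for nonvanishing of $L(E,1)$ and triviality of torsion. The crucial point is that for any non-constant morphism $f\colon C\to E$ from a smooth curve one has $f(C(\Q))\subseteq E(\Q)=\{O\}$, so $C(\Q)\subseteq f^{-1}(O)$, a completely explicit finite set. Thus it is enough to produce a smooth curve $C/\Q$ with $g(C)\ge 2$ and a non-constant $f\colon C\to E$ such that the rational points of $f^{-1}(O)$ are precisely the desired set and lie on $C$; for $n\ge 3$ one then re-embeds $C$ in $\PP^n$ carrying those points to $S$, while for $n=2$ one must realise $C$ as a plane curve from the outset.

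Write $S=\{P_1,\dots,P_r\}$ with $r=\#S$. To get an abstract $C$ with exactly $r$ rational points, take the fibre product over $\PP^1$ of the degree-$2$ map $\pi=x\colon E\to\PP^1$ (which is totally ramified over $\infty$, the point lying under $O$) with a degree-$r$ rational map $\psi\colon\PP^1\to\PP^1$ that is \'etale over $\infty$ with $\psi^{-1}(\infty)$ equal to $r$ distinct rational points and whose remaining $2r-2$ simple branch points avoid the branch locus $\{e_1,e_2,e_3,\infty\}$ of $\pi$; let $C$ be the normalisation. Disjointness of the branch loci makes $C$ smooth, a generic choice makes it geometrically connected, and Riemann--Hurwitz gives $g(C)=2r-1\ge 3$ for $r\ge 2$ (larger genus is available by enlarging $\psi$ or adding branch points away from $\infty$). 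Since $\psi$ is \'etale over $\infty$ while $\pi$ maps $O$ to $\infty$, the degree-$r$ cover $f\colon C\to E$ is \'etale over $O$ with fibre isomorphic as a $\Q$-scheme to $\psi^{-1}(\infty)$, namely $r$ distinct smooth rational points $Q_1,\dots,Q_r$; hence $C(\Q)\subseteq f^{-1}(O)=\{Q_1,\dots,Q_r\}$, and as each $Q_i$ is rational, $C(\Q)=\{Q_1,\dots,Q_r\}$. The cases $r\le 1$ are handled by explicit double covers $z^2=w$ of $E$ with enough odd-order points to force $g(C)\ge 2$: take $\ord_O(w)$ odd for $r=1$ (so $f^{-1}(O)$ is one rational ramification point) and $\ord_O(w)=0$ with $w(O)$ a non-square in $\Q$ for $r=0$ (so $f^{-1}(O)$ carries no rational point).

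For $n\ge 3$ the embedding is soft. Fix a line bundle $L$ on $C$ of large degree, so that $L$ and the relevant sub-linear systems are very ample and $h^0(C,L)$ is large; the condition that sections $s_0,\dots,s_n\in H^0(C,L)$ satisfy $[s_0:\dots:s_n](Q_i)=P_i$ for all $i$ is a system of $\Q$-linear equations on the $s_j$, and for $\deg L$ large enough a general solution defines a closed immersion $\iota\colon C\hookrightarrow\PP^n$ (the imposed point conditions do not destroy very-ampleness). Then $\iota(C)$ is a smooth curve in $\PP^n$ of genus $\ge 2$ with $\iota(C)(\Q)=\iota(C(\Q))=S$.

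The case $n=2$ is where the real work lies, because a smooth curve in $\PP^2$ must be a plane curve of some degree $d$, and for $d\ge 5$ a curve of genus $\binom{d-1}{2}$ with a plane model is very special — so the abstract $C$ above cannot simply be re-embedded. One must instead exhibit a smooth plane curve $C$ of large degree $d$ that passes through $P_1,\dots,P_r$ and admits a non-constant morphism $f\colon C\to E$ with $f^{-1}(O)=S$ as a set. Taking a plane-cubic model of $E$, the naive candidates are preimages $C=\mu^{-1}(E)$ under maps $\mu=[\xi:\eta:\zeta]$ given by forms of a common degree with $\xi,\zeta$ vanishing at the $P_i$, so that $\mu^{-1}(O)=\{\xi=\zeta=0\}$ and a generic choice (via Hilbert irreducibility) leaves the residual intersection points irrational, forcing $C(\Q)=\{P_1,\dots,P_r\}$; the trouble is that the base locus of $\mu$ would make $C$ singular, so a more delicate construction is required. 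That a suitable smooth plane curve exists follows from a dimension count: the family of smooth plane curves of degree $d$ carrying a morphism to the fixed $E$ has dimension growing linearly in $d$, so for $d$ large compared with $r$ it still meets the finitely many conditions imposed by $S$, and a finite effective search then locates one. Proving that this intersection of loci is non-empty — presumably by degenerating $C$ to a reducible curve on which the map to $E$ is visible and then smoothing — is the step I expect to be the main obstacle; with it in hand, all other ingredients (the curve $E$, the covers $\pi,\psi$ or the datum $w$, the linear algebra, and the $n=2$ search) are effective, which yields the algorithm.
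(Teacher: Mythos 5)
Your central mechanism --- trapping $C(\Q)$ inside the fibre $f^{-1}(O)$ of a non-constant map to a rank-$0$ elliptic curve with trivial torsion --- is exactly the engine of the paper as well (there the target is the twist $\ell Y^2=X^3-1$ with $\ell\equiv 5 \bmod 12$, and the ``map'' is realised arithmetically through the factorisation $f_{S,\mathcal{B}}=g\cdot((h-1)g+1)$ together with a valuation argument forcing $g(x)$ to be a cube). Your fibre-product construction over $\PP^1$ is essentially Poonen's device for controlling $\#C(\Q)$, and your $n\ge 3$ re-embedding step is believable for $\deg L$ large, though it still needs the (routine but unwritten) verification that a general $(n+1)$-tuple of sections satisfying the linear conditions $[s_0:\cdots:s_n](Q_i)=P_i$ defines a closed immersion rather than merely a morphism.

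The genuine gap is the case $n=2$, which you flag yourself but do not close, and your proposed repair is unlikely to work as stated. The dimension count is misleading: a general smooth plane curve of degree $d\ge 4$ has simple Jacobian and admits no non-constant map to any elliptic curve, so the locus of degree-$d$ plane curves covering a \emph{fixed} $E$ is a special subvariety whose non-emptiness (let alone non-emptiness over $\Q$, through $r$ prescribed points, with the residual points of $f^{-1}(O)$ all irrational) cannot be read off from dimensions; you would have to carry out the degeneration-and-smoothing argument and simultaneously control the fibre $f^{-1}(O)$ set-theoretically, which is the hard part. The paper resolves precisely this point by an explicit construction: the superelliptic curve $y^d=f_{S,\mathcal{B}}(x)$ with $\deg f_{S,\mathcal{B}}=d$ \emph{is} a smooth plane curve of degree $d$, it interpolates $S$ by design via the Lagrange-type polynomial $h$, and the Newton-polygon and valuation analysis supplies, in place of your missing geometric cover, an effective arithmetic descent to $E_\ell$. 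As it stands your argument establishes the theorem only for $n\ge 3$ (and even there modulo the embedding details), while the paper's superelliptic construction is exactly what fills the $n=2$ hole.
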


Our proof gives an effective algorithm for computing the curve $C$ from the given set of rational points $S$. It builds on earlier constructions due to Gajovi\'c \cite{gajovic}
and the current author \cite{me}. Let $\mathcal{P}_R$ denote the perfect powers of the ring $R$:
\[
\mathcal{P}_R=\{ a^m: a\in R,m\geq 2\}.
\]
They show that for a finite set of perfect powers $S\subset \mathcal{P}_R$, for $R=\Z$ and $R=\Q$ respectively, there is a polynomial $f_S(X)\in \Z[X]$ such that $f_S(X)\cap \mathcal{P}_R = S$.  We will show how the construction of the polynomial $f_S$ in those papers can be adapted to give a construction of a suitable curve $C$ in our context.

\section{Superelliptic Curves}

The proof of Theorem \ref{thm:main} involves constructing suitable 'superelliptic' curves, and then gluing them together. By a \textit{superelliptic curve $C$ over $\Q$} \cite{arithmetic} we mean a smooth projective curve in $\PP^2$ associated to the affine model given by
$$C:\; y^m=f(x),$$
where $f$ is separable of degree $d\geq 3$ and $m\geq 2$ is an integer. If $m=2$ and $d=3$ then $C$ is an elliptic curve, 
whereas if $m=2$ and $d \geq 5$ then $C$ is hyperelliptic. 
When $m=d$, the genus $g$ of the curve is $g=(d-1)(d-2)/2$; thus if $d \ge 4$,
then $g \ge 2$ and so $C(\Q)$ is finite by Faltings' theorem \cite{faltings}. 

We begin by assuming our prescribed set of points $S$ satisfies some nice conditions.
\begin{defn}
Let $S=\{(a_1,b_1),\dots ,(a_r,b_r)\} \subset \mathbb{A}^2(\Q)$ be a finite set of distinct rational points in the affine plane.
We say that $S$ is \textit{acceptable} if the following conditions are satisfied: 

\begin{enumerate}[(i)]
    \item if $a_i=a_j$ then $b_i=b_j$,
    \item $a_i,b_j \neq 0$ for all $i,j$,
    \item $a_i,b_j\in \Z$ for all $i,j$.
\end{enumerate}
\end{defn}
For any such $S$, we can construct a superelliptic curve $C$ whose set of affine rational points is exactly $S$,
and such that $C$ has no rational points at infinity. In Section \ref{section:dimensionn}, we will show how to extend the argument to sets in $\PP^n(\Q)$.

\begin{thm}\label{thm:theone}
Given an acceptable set $S\subset \mathbb{A}^2(\Q)$, and a finite subset $\mathcal{B} \subset \mathbb{C}$,
 there exists a separable polynomial $f_{S,\mathcal{B}}(X)\in \Q[X]$ of some degree $d \ge 4$, not vanishing at any point in $\mathcal{B}$,
such that $C(\Q)=S$, where
$C$ is the superelliptic curve associated to the affine model
\[
    C \; : \; y^d=f_{S,\mathcal{B}}(X).
\]
\end{thm}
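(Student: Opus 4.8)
The overall strategy is: place the points of $S$ on the curve by Lagrange interpolation; fix the degree, the parity and the leading coefficient of $f_{S,\mathcal{B}}$ so that the ``soft'' requirements (separability, genus $\ge 2$, non-vanishing on $\mathcal{B}$, no rational points at infinity) become automatic; and then spend what remains of the freedom in the coefficients of $f_{S,\mathcal{B}}$, adapting the constructions of Gajovi\'c \cite{gajovic} and \cite{me}, to exclude every rational point not in $S$. Two observations constrain the answer from the outset. First, $d$ must be \emph{odd}: for $d$ even, $(a_i,-b_i)$ would be a rational point of $C$ distinct from $(a_i,b_i)$ — recall $b_i\neq 0$ by acceptability — so $C(\Q)$ could not equal $S$; whereas for $d$ odd the equation $y^d=b_i^{\,d}$ forces $y=b_i$, and then $S$ is exactly the set of rational points of $C$ with $x$-coordinate in $\{a_1,\dots,a_r\}$. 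Second, if $c$ denotes the leading coefficient of $f_{S,\mathcal{B}}$, then the $d$ points of the smooth projective model of $C$ lying over $x=\infty$ are unramified and correspond to the $d$-th roots of $c$; choosing $c$ to be a prime $\ell_0$ (so $c\notin(\Q^\times)^d$) forces all of them to be irrational, so $C$ acquires no rational points at infinity.

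For the interpolation, let $\alpha_1,\dots,\alpha_s$ be the distinct values among the $a_i$ and let $\beta_j$ be the common value of $b_i$ over those $i$ with $a_i=\alpha_j$ (well defined by acceptability~(i)). Set $g(X)=\prod_{j=1}^s(X-\alpha_j)\in\Z[X]$ and let $N(X)\in\Q[X]$ be the unique polynomial of degree $<s$ with $N(\alpha_j)=\beta_j^{\,d}$. For every $M\in\Z[X]$, the polynomial
\[
   f=N+g\,M
\]
then satisfies $f(a_i)=b_i^{\,d}$ for all $i$, so $S\subseteq C(\Q)$. I take $d$ to be an odd integer $\ge\max(s,5)$ (the bound $d\ge 5$ giving genus $(d-1)(d-2)/2\ge 2$) and $\deg M=d-s$ with leading coefficient $\ell_0$, so that $\deg f=d$. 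For all but finitely many primes $\ell_0$, and all but a thin set of choices of the remaining coefficients of $M$, the polynomial $f$ is separable, vanishes at no point of the finite set $\mathcal{B}$, and (if one wishes) is irreducible over $\Q$; I impose all of this, which costs nothing.

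The crux is to choose the remaining coefficients of $M$ so that $C$ has no affine rational point whose $x$-coordinate lies outside $\{\alpha_1,\dots,\alpha_s\}$. Such a point, written $(p/q,\,\cdot\,)$ with $\gcd(p,q)=1$, forces $F(p,q)=s^{\,d}$ for some $s\in\Z$, where $F$ is the degree-$d$ homogenisation of $f$ (the analogous analysis when $\ell_0\mid q$ is handled the same way); since $F$ is separable of degree $d\ge 5$, this is a Thue-type equation, so $C(\Q)$ is finite — immediately from Faltings \cite{faltings}, and, when $f$ has an irreducible factor of degree $\ge 3$, effectively so by the theory of unit equations. Following \cite{gajovic,me}, to pin this finite set down to $S$ I would impose congruence conditions on the free coefficients of $M$ at auxiliary primes $\ell\equiv 1\pmod d$ (chosen coprime to $\ell_0$ and to the interpolation data): reducing $F(p,q)=s^{\,d}$ modulo such an $\ell$ restricts the class of $(p:q)$ in $\PP^1(\F_\ell)$, and with enough such primes the only classes left are the reductions of the $\alpha_j$, forcing $p/q=\alpha_j$ — contradicting the assumption unless the point is one of the prescribed $(a_i,b_i)$. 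The purpose of taking $d$ large is precisely to provide, through the $d-s$ free coefficients of $M$, enough room for all the congruences the argument demands.

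The step I expect to be the genuine obstacle is exactly this last one. One must show that the congruence conditions at the auxiliary primes can be imposed \emph{simultaneously} with the interpolation constraint $f\equiv N\pmod{g}$ and with separability (and, if used, irreducibility) — that is, that the induced linear system on the coefficients of $M$ is consistent — and, more delicately, that the number of auxiliary primes needed to neutralise the (finite, but a priori enormous) set of would-be extra points can be kept below the number $d-s$ of available coefficients; this is what dictates how large $d$ must be taken relative to $|S|$. Everything else — separability, non-vanishing on $\mathcal{B}$, the genus bound, the absence of rational points at infinity — is routine once the choices above are fixed.
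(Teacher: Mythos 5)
Your proposal correctly handles the routine parts (odd $d$, interpolation via $f=N+gM$, a non-$d$-th-power leading coefficient to kill the points at infinity, separability and avoidance of $\mathcal{B}$ by genericity), but the step you yourself flag as ``the genuine obstacle'' is a genuine gap, and I do not believe the congruence sieve can be repaired. Reducing $F(p,q)=s^d$ modulo an auxiliary prime $\ell$ only forces $F(p,q)$ to be a $d$-th power residue (or zero) in $\F_\ell$; by the Weil bounds, for \emph{any} separable form $F$ of degree $d$ the set of classes $(p:q)\in\PP^1(\F_\ell)$ with $F(p,q)$ a nonzero $d$-th power residue has size roughly $\ell/d$, so no choice of the coefficients of $M$ modulo $\ell$ can cut the admissible classes down to just the reductions of the $\alpha_j$. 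Finitely many auxiliary primes therefore leave a positive-density set of admissible $(p:q)$ modulo their product, and each admissible residue class still contains infinitely many rationals. To conclude you would need an a priori height bound on solutions of $y^d=f(x)$; Faltings is ineffective, and the effective (Baker-type) bounds for superelliptic equations depend on the heights of the coefficients of $M$, which in turn grow with the moduli you are imposing --- a circularity your sketch does not address. (Also, $F(p,q)=s^d$ with both $(p,q)$ and $s$ unknown is a superelliptic equation, not a Thue equation.)

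The paper avoids all of this by building a \emph{multiplicative} structure into $f$ rather than sieving: it sets $f_{S,\mathcal{B}}=g\cdot\bigl((h-1)g+1\bigr)$, where $h$ interpolates the values $b_i^d$ and $g(X)=\ell N^6\prod_i(X-a_i)^6+1$ for a carefully chosen prime $\ell$. A valuation argument on the two coprime factors shows that any rational point forces $g(x)=\beta^3$ with $\beta\in\Q^*$; since $g(x)-1=\ell\cdot(\text{square})$, this produces a rational point on the rank-zero quadratic twist $\ell Y^2=X^3-1$ of the Mordell curve (using $\ell\equiv 5\bmod 12$ and the result of Juyal--Moody--Roy), whose only affine rational point is $(1,0)$. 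Hence $g(x)=1$, i.e.\ $x=a_i$, with no height bound or sieve needed. If you want to salvage your write-up, you should replace the congruence step with a descent to a rank-zero auxiliary curve of this kind; as it stands, the central exclusion step is not proved.
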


Our general argument will not apply to the case when the set of prescribed points $S$ has size $|S|=1$. In this case, we can take
any curve $C/\Q$ of genus $g \ge 2$ with one rational point. A
suitable change of coordinates will send the point in $S$ to the point on $C$. For instance, take the hyperelliptic curve over $\Q$ with affine model
\[
C:y^2=x^5-2.
\]
We verify in Magma \cite{magma} that the rational points on the Jacobian, $J(\Q)$, form a group of rank 0 with trivial torsion subgroup. Since $C(\Q)$ injects into $J(\Q)$, we conclude that $C$ has no rational points other than the one at infinity.

From now on we assume that $S=\{(a_1,b_1),\dots ,(a_r,b_r)\}$ is acceptable and $|S|=r>1$. Set 
\[
d=18r+3,
\]
and let
\[
m:=\prod_{j<k}^r (a_j-a_k).
\]

Let $L(X)\in \Q[X]$ be the Lagrange interpolation polynomial passing through the points $S'$, where
\[
S'=\{(a_i,b_i^d)\mid (a_i,b_i)\in S\},
\]
which is of the form
\[ 
    L(X)=\sum_{j=1}^r b_j^d\prod_{\substack{0\leq k \leq r\\ k\neq j}}\frac{X-a_k}{a_j-a_k}.
\]

\begin{lem}\label{lem:sep}
Let $S=\{(a_1,b_1),\dots,(a_r,b_r)\}$ be an acceptable set of points of size
$|S|=r$. Let $n$ be any integer such that $n\geq r$. 
Then there exists an irreducible polynomial
\[
h(X)=\sum_{i=1}^n \alpha_i X^i \in \Q[X]
\]
satisfying the following conditions
\begin{enumerate}[(i)]
\item $h(a_i)=b_i^{d}$ for all $(a_i,b_i)\in S$,
\item $h(X)$ has degree $n$,
\item $m \cdot h(X) \in \Z[X]$.
\end{enumerate}
\end{lem}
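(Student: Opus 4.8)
The plan is to start from the Lagrange interpolation polynomial $L(X)$, which already satisfies condition (i) by construction but has degree at most $r-1$ (hence typically too small) and need not be irreducible or have integral coefficients after scaling. I would write the desired polynomial in the form
\[
h(X) = L(X) + m\cdot g(X)\cdot \prod_{i=1}^r (X-a_i),
\]
where $g(X)\in\Z[X]$ is a polynomial to be chosen of degree $n-r$, so that the product term vanishes at every $a_i$ and condition (i) is automatic regardless of $g$. The factor $m=\prod_{j<k}(a_j-a_k)$ is inserted precisely so that, once we clear the denominators of $L$ (whose denominators divide the $(a_j-a_k)$, i.e.\ divide $m$), the expression $m\cdot h(X)$ lands in $\Z[X]$: that will give condition (iii). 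Condition (ii), that $\deg h = n$, is arranged by choosing $g$ to have degree exactly $n-r$ with nonzero leading coefficient.

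The real content is getting irreducibility while keeping the finitely many constraints above. For this I would fix the lower-order behaviour of $g$ to control a prime and then vary the top coefficients. Concretely, pick an auxiliary prime $p$ not dividing $m$ and not dividing any relevant quantity, and choose the coefficients of $m\cdot h$ so that the polynomial is Eisenstein at $p$ (or $p$-Eisenstein after a shift $X\mapsto X+c$): arrange that $p$ divides all coefficients except the leading one, and $p^2$ does not divide the constant term. Since the values $h(a_i)=b_i^d$ are already pinned down, one checks there is enough freedom in the remaining coefficients of $g$ — there are $n-r+1 \ge 1$ of them, and $n$ can be taken as large as needed — to simultaneously meet a congruence condition mod $p$ on each coefficient and a non-congruence mod $p^2$ on the constant term, while keeping the leading coefficient a $p$-unit. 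Eisenstein's criterion then forces $h$ to be irreducible over $\Q$.

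The main obstacle I anticipate is the bookkeeping showing that the Eisenstein congruences can be imposed \emph{consistently} with the interpolation constraints: the values at $a_1,\dots,a_r$ fix $h \bmod \prod(X-a_i)$, so one must verify that this residue does not already obstruct the desired reduction mod $p$ — which is why it is important that $p \nmid m$ and, if necessary, that we are free to translate $X$ and to enlarge $n$. A clean way to sidestep delicate case analysis is to note that the set of $g \in \Q[X]$ of degree $n-r$ for which $h$ is reducible is "small" (contained in a proper Zariski-closed subset, or avoidable by a density/counting argument over the integral points of a box), whereas the Eisenstein construction exhibits at least one good $g$; combining these shows a valid $h$ exists and in fact can be found effectively by searching coefficients in a bounded range. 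I would present the Eisenstein argument as the primary route since it is constructive and self-contained.
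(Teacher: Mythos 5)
Your decomposition $h(X)=L(X)+m\,g(X)\prod_i(X-a_i)$ is exactly the paper's starting point, and it does deliver (i)--(iii) without difficulty. The problem is the irreducibility step. If $m\cdot h$ is Eisenstein at $p$ then $m\cdot h\equiv c_nX^n \pmod p$, hence $m\,b_i^{d}=m\,h(a_i)\equiv c_n a_i^{n}\pmod p$ for every $i$. These values $h(a_i)=b_i^d$ are completely rigid: varying $g$ changes $h$ only by multiples of $\prod_i(X-a_i)$, which vanish at every $a_i$. So the ``freedom in the remaining coefficients of $g$'' you invoke is irrelevant to the obstruction; the Eisenstein congruences amount to $p\mid b_i^{d}a_j^{n}-b_j^{d}a_i^{n}$ for all $i,j$, a condition on $p$ and the given data alone, and the gcd of these integers is $1$ for typical data, so \emph{no} prime works. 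A shift $X\mapsto X+c$ contributes a single extra parameter against $\binom{r}{2}$ such conditions, and a resultant argument shows only finitely many primes could ever admit a suitable $c$; there is no reason any of them does. Also note that $n$ is fixed in the statement of the lemma (and equals $6r+3$ in the application), so you are not free to enlarge it.

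Your fallback is also not correct as stated: the set of $g$ for which $h$ is reducible is \emph{not} contained in a proper Zariski-closed subset of the coefficient space, since over $\overline{\Q}$ every polynomial factors and the multiplication maps on coefficient spaces are dominant. The reducible locus is a \emph{thin} set in Serre's sense, and the statement that one can avoid it by an integral specialization is precisely Hilbert's irreducibility theorem --- which is what the paper uses. Concretely: take $R(X,Y)=L(X)+b(X)c(X)Y$ with $b(X)=\prod_i(X-a_i)$ and $c(X)\in\Z[X]$ monic of degree $n-r$ coprime to $L$; this is irreducible over $\Q(Y)$ because it is linear in $Y$ with coprime coefficients, and HIT yields infinitely many integers $y$ with $R(X,y)$ irreducible over $\Q$. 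Replacing your Eisenstein step by this specialization argument repairs the proof; as written, the proof has a genuine gap.
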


\begin{proof}[Proof of Lemma \ref{lem:sep}]
Let
\[
    b(X)=\prod_{\substack{i=1}}^r(X-a_i)\in \Z[X].
\]
Since $S$ is acceptable, $b_j\neq 0$ for all $j$, so $L(X)$ and $b(X)$ are coprime. Choose $c(X)\in \Z[X]$ to be monic and coprime to $L(X)$ of degree $n-r$, and let
\[
    R(X,Y)=L(X)+b(X)c(X)Y \in \Q(Y)[X].
\]
This is irreducible in $\Q(Y)[X]$.
By Hilbert's
irreducibility theorem \cite{serre}, there is a thin set $A\subset
\PP^1(\Q)$ such that if $t\notin A$, then $R(X,t)$ is irreducible over
$\Q$.  It follows that there exist infinitely many integers $y \notin A$.
Choose any such $y$ and let $h(X)=R(X,y)$. Then $h(X)$ will satisfy the conditions above. 
\end{proof}

Although the polynomial $h(X)$ interpolates the points of $S$, we cannot be sure that its associated superelliptic curve does not have other rational points. Our approach will be to build, from $h(X)$, a superelliptic curve $C/\Q$ of degree $d$ such that all rational points on $C$ map to the rational points of an elliptic curve $E/\Q$. This elliptic curve will be chosen to have rank 0, and this will allow us to conclude there are no other rational points on $C$. 

\section{Twists of the Mordell curve}

Let $E$ be an elliptic curve over $\Q$. Let $rk$ denote its Mordell-Weil rank, so that $E(\Q)\cong E(\Q)_{tors}\times \Z^{rk}$. Let $E_k$ denote the quadratic twist of $E$ by  $\Q(\sqrt{k})$. A celebrated conjecture of Goldfeld's predicts that as $k$ varies, the rank should be 0 half of the time, and 1 the other half. Namely,
\begin{conj}[Goldfeld]\cite{goldfeld}
Let $E_k$ be the quadratic twist of $E$ by $k$. Then
\[
\lim_{X \rightarrow \infty}\frac{\sum_{|k|<X}rk(E_k)}{\#\{k:|k|<X\}}=\frac{1}{2}.
\]
\end{conj}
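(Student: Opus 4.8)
The plan is to reduce Goldfeld's conjecture to two equidistribution statements --- one for each value of the root number --- and then to feed in the known cases of the Birch--Swinnerton-Dyer conjecture in analytic rank at most $1$. Fix $E/\Q$, say of conductor $N$, and restrict attention to squarefree $k$ coprime to $2N$ (the reduction to this case, and the passage back to general $k$ or to fundamental discriminants, is routine and does not change the value of the limit). The root number $w(E_k)\in\{\pm1\}$ is a product of explicit local signs, and a short computation with congruence and quadratic-residue conditions on $k$ shows that $w(E_k)=+1$ on a set of density $\tfrac12$ and $w(E_k)=-1$ on a set of density $\tfrac12$. By Gross--Zagier and Kolyvagin, $w(E_k)=+1$ together with $L(E_k,1)\neq 0$ forces $\rank E_k(\Q)=0$, while $w(E_k)=-1$ together with $L'(E_k,1)\neq 0$ forces $\rank E_k(\Q)=1$. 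Hence it suffices to prove: (a) among $k$ with $w(E_k)=+1$, those with $L(E_k,1)\neq 0$ have density $1$; and (b) among $k$ with $w(E_k)=-1$, those with $L'(E_k,1)\neq 0$ have density $1$. Granting (a) and (b), one gets $\rank E_k(\Q)=0$ for density $\tfrac12$ of all $k$ and $\rank E_k(\Q)=1$ for density $\tfrac12$, so the average rank in the displayed limit equals $\tfrac12\cdot 0+\tfrac12\cdot 1=\tfrac12$; the zero-density exceptional set contributes nothing once one notes that $\rank E_k(\Q)$ there is $O(\log|k|/\log\log|k|)$ by the standard bound on the $2$-Selmer rank.

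For step (a) the input is Waldspurger's formula via the Shimura correspondence: there is a fixed weight-$3/2$ cusp form $g=\sum_n c(n)q^n$ with $|c(|k|)|^2=\kappa\,|k|^{1/2}L(E_k,1)$ for $k$ in a prescribed square class, $\kappa\neq 0$, so $L(E_k,1)\neq 0$ if and only if $c(|k|)\neq 0$, and (a) becomes the assertion that $c(n)\neq 0$ for density-$1$ many admissible $n$. For step (b) the input is Gross--Zagier: when $w(E_k)=-1$ one builds a Heegner point $P_k\in E_k(\Q)$ over a well-chosen auxiliary imaginary quadratic field, and $L'(E_k,1)\neq 0$ if and only if $P_k$ is non-torsion, so (b) becomes the assertion that $P_k$ has infinite order for density-$1$ many $k$. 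The positive-density forms of both statements are classical --- proved by estimating the first and second moments of the relevant families and applying Cauchy--Schwarz (Bump--Friedberg--Hoffstein, Iwaniec, Murty--Murty, Perelli--Pomyka\l{}a, Ono--Skinner, among others) --- but moment methods of this shape inherently fall short of density $1$.

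Upgrading (a) and (b) to density exactly $1$ --- equivalently, showing that the set of vanishing central $L$-values and the set of torsion Heegner points each have density $0$ --- is the crux, and the only technique that reaches it is an equidistribution analysis of the $2^\infty$-Selmer groups across the twist family. One models $\Sel_2(E_k/\Q)$ and its higher analogues as the radical of an alternating bilinear form over $\F$ of size $2$ assembled from the local conditions at the primes dividing $k$; one then shows that, as $k$ ranges over squarefrees with a prescribed number of prime factors, these forms equidistribute in the space of such forms --- the technical heart, requiring control of higher Cassels--Tate pairings and a Markov-chain/moment computation on bilinear forms over $\F_2$ --- and from the limiting distribution one reads off that the $2^\infty$-Selmer rank equals $0$ for density $\tfrac12$ of all $k$ and equals $1$ for density $\tfrac12$. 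Selmer rank $0$ gives $\rank E_k(\Q)=0$ outright; Selmer rank $1$ forces the Heegner point to be non-torsion, hence $\rank E_k(\Q)=1$ by Gross--Zagier--Kolyvagin; together these are precisely (a) and (b), and they close the argument. The main obstacle is that this Selmer-equidistribution machinery is presently available only under hypotheses on $E$ (essentially full rational $2$-torsion and no rational cyclic $4$-isogeny); removing them means handling an arbitrary $2$-adic Galois image $\Gal(\overline{\Q}/\Q)\to\GL_2(\Z_2)$ together with the ramification constraints it imposes on the admissible twisting primes, and this is the step I expect to be the hardest.
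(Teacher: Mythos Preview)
The statement you are attempting to prove is labeled \textbf{Conjecture} in the paper, not Theorem or Proposition. The paper does not prove it; it merely records Goldfeld's conjecture as background motivation before invoking a specific, unconditional result (Theorem~\ref{lem:twist}) about rank-$0$ twists of the Mordell curve. There is therefore no ``paper's own proof'' to compare against.

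Your proposal is not a proof either, and you acknowledge this yourself in the final paragraph: the Selmer-equidistribution input you need---density-$1$ versions of (a) and (b)---is not known for general $E/\Q$, only for curves with full rational $2$-torsion and no rational cyclic $4$-isogeny (work of Kane, Smith, and others). So what you have written is a reduction of Goldfeld's conjecture to another open problem (density-$1$ nonvanishing of central $L$-values and central derivatives in quadratic twist families, or equivalently the Selmer distribution statement), together with a summary of the known partial results. That is a reasonable survey of the state of the art, but it is not a proof, and the gap you identify---handling an arbitrary $2$-adic Galois image---is genuine and currently unresolved. The conjecture remains open.
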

In our construction, we will relate the rational points on our superelliptic curve to the rational points on a quadratic twist $E_p$ of the Mordell curve $E$ by a prime $p$, where
\[
E:Y^2=X^3-1,
\]
and
\[
E_{p}:pY^2=X^3-1.
\]
We want to choose $p$ so that $rk(E_p)=0$. Juyal, Moody and Roy \cite{phillipe} use the technique of 2-descent to show it is sufficient to choose $p\equiv 5 \mod 12$.

\begin{thm}\cite{phillipe}\label{lem:twist}
Let $E:Y^2=X^3-1$ be the Mordell curve, and let 
\[
E_{p}:pY^2=X^3-1
\]
denote its twist by a prime $p$. Then for all primes $p\equiv 5 \mod 12$, we have
\[
E_{p}(\Q)\cong \{(1,0), \infty\}.
\]
\end{thm}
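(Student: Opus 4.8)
The plan is to prove that for a prime $p \equiv 5 \pmod{12}$, the only rational points on $E_p: pY^2 = X^3 - 1$ are $(1,0)$ and the point at infinity, by carrying out a complete $2$-descent on $E_p$ over $\Q$. First I would put $E_p$ into Weierstrass form by the substitution $X \mapsto pX$, $Y \mapsto pY$, which gives $Y^2 = X^3 - p^{-2}$, and then clear denominators via $X \mapsto X/p^2$, $Y \mapsto Y/p^3$ to arrive at the integral model $Y^2 = X^3 - p^4$; this curve has the rational $2$-torsion point coming from $(1,0)$ pulled to the single $\Q$-rational $2$-torsion point $(p^{4/3}, 0)$ — wait, that is not rational, so more carefully I would note that $E_p$ has trivial rational $2$-torsion (the cubic $X^3 - 1$ has only one rational root, giving a non-$2$-torsion-looking but order-two point on the twisted model), and instead use descent via a $2$-isogeny or the general bielliptic/Selmer machinery for curves with a rational $2$-torsion point after a suitable model. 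The cleanest route: $E_p$ has the rational point $(1,0)$ of order $2$, so I would use the standard descent by the $2$-isogeny $\phi: E_p \to E_p'$ whose kernel is generated by $(1,0)$.

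Concretely, writing $E_p$ in the form $Y^2 = X(X^2 + aX + b)$ after translating $(1,0)$ to the origin — explicitly $X \mapsto X+1$ turns $pY^2 = X^3 - 1$ into $pY^2 = X^3 + 3X^2 + 3X = X(X^2+3X+3)$, and then absorbing $p$ gives $Y^2 = X(X^2 + 3pX + 3p^2)$ after $X \mapsto pX$, $Y \mapsto pY$ — I would compute the two descent maps. The image of the descent map $E_p(\Q)/\phi'(E_p'(\Q)) \hookrightarrow \Q^\times/(\Q^\times)^2$ is constrained to divisors of $b = 3p^2$, so $d \in \{\pm 1, \pm 3, \pm p, \pm 3p\}$ modulo squares, and dually the other descent map lands among divisors of $b' = a^2 - 4b = 9p^2 - 12p^2 = -3p^2$. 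The key step is then to rule out every nontrivial square class by showing the corresponding torsor
\[
N^2 = d M^4 + a M^2 e^2 + \frac{b}{d} e^4
\]
(and its analogue for $\phi'$) has no $\Q$-points; this is where the congruence $p \equiv 5 \pmod{12}$ enters, via local solvability obstructions at the primes $2$, $3$, and $p$. For each candidate $d$ I would exhibit a prime $\ell \in \{2,3,p\}$ modulo which the torsor equation has no solution — for instance $d = -1$ or $d = -3$ should fail at $\ell = 3$ or at $\infty$ because the right side is negative or a non-residue, and $d = p$, $d = 3p$ should fail at $\ell = p$ or $\ell = 3$ using $\left(\frac{-3}{p}\right) = \left(\frac{p}{3}\right) = -1$, which is exactly the content of $p \equiv 5 \pmod{12}$ (equivalently $p \equiv 2 \pmod 3$ and $p \equiv 1 \pmod 4$, or the relevant variant).

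Having shown both Selmer groups $\mathrm{Sel}^{(\phi)}(E_p/\Q)$ and $\mathrm{Sel}^{(\phi')}(E_p'/\Q)$ are as small as possible (just the image of $2$-torsion), the exact sequence relating them to $E_p(\Q)/2E_p(\Q)$ forces $\mathrm{rank}\, E_p(\Q) = 0$. Combined with a quick computation that $E_p(\Q)_{\mathrm{tors}}$ is exactly $\{(1,0), \infty\} \cong \Z/2\Z$ — which follows since the torsion injects into the reduction at a prime of good reduction and the only rational $2$-torsion is $(1,0)$, while the Nagell–Lutz or reduction-mod-small-primes argument kills any odd torsion — we conclude $E_p(\Q) = \{(1,0), \infty\}$. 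The main obstacle I anticipate is bookkeeping the local conditions uniformly in $p$: one must verify that a \emph{single} congruence class of $p$ simultaneously obstructs all the bad square classes in both descent maps, which requires carefully tracking Hilbert symbols and quadratic residues at $2$, $3$, and $p$ and checking there is no surviving class; the rest is routine. (I would cite Juyal–Moody–Roy \cite{phillipe} for the detailed verification, as the theorem statement does.)
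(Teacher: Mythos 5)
This theorem is quoted in the paper directly from Juyal--Moody--Roy \cite{phillipe}, which proves it by exactly the descent you outline (a $2$-isogeny descent using the rational $2$-torsion point $(1,0)$ on a Weierstrass model $Y^2=X(X^2+3pX+3p^2)$, with the congruence $p\equiv 5\pmod{12}$ killing the nontrivial square classes locally at $2$, $3$, and $p$); the paper itself gives no proof beyond this citation, so your proposal matches its approach. Your opening change-of-variable computations contain slips (the twist is $Y^2=X^3-p^3$, not $Y^2=X^3-p^4$, and $E_p$ does \emph{not} have trivial rational $2$-torsion --- $(1,0)$ is a rational point of order $2$, which you then correctly exploit), but your eventual setup $b=3p^2$, $b'=-3p^2$ is right and the deferred local verifications are precisely the content of the cited reference.
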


\section{Choosing a prime}

We continue with our construction of $f_{S,\mathcal{B}}$. Apply Lemma
~\ref{lem:sep} with $n=6r+3$
to produce $h(X)$ of degree $n$. Let
\[
N:=\prod_{v_p(m)>0}p.
\]
With $S=\{(a_1,b_1),\dots, (a_r,b_r)\}$ as before, let
\[
    g(X,t)=tN^6\prod_{i=1}^r(X-a_i)^6+1 \in \Q[X,t].
\]

\begin{prop}\label{prop:mini}
Let $g(X,t)$ as above. Given a finite set of values $\mathcal{B}\subset \mathbb{C}$, the zeroes of $g(X,t_0)$ lie outside of $\mathcal{B}$ for all but finitely many values of $t_0\in \mathbb{C}$.
\end{prop}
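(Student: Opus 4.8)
The plan is to reduce the statement to a one‑point‑at‑a‑time count: for each fixed $\beta\in\mathcal{B}$, the value $\beta$ is a zero of the polynomial $g(X,t_0)$ for at most one choice of $t_0\in\mathbb{C}$. Granting this, the set of ``bad'' parameters $t_0$ — those for which some zero of $g(X,t_0)$ happens to lie in $\mathcal{B}$ — is contained in a union of at most $\#\mathcal{B}$ singletons, hence finite, and every other $t_0$ has all its zeroes outside $\mathcal{B}$.

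To carry this out I would first record that $g(\beta,t_0)=t_0\,N^6\prod_{i=1}^r(\beta-a_i)^6+1$ is an affine‑linear function of $t_0$ whose leading coefficient is $N^6\prod_{i=1}^r(\beta-a_i)^6$. Here $N\neq 0$: since the $a_i$ are distinct, $m=\prod_{j<k}(a_j-a_k)\neq 0$, and $N$ is the product of the primes dividing $m$, so $N\geq 1$. Now split into two cases. If $\beta=a_i$ for some $i$, then $\prod_{i=1}^r(\beta-a_i)^6=0$, so $g(\beta,t_0)=1$ for every $t_0$, and $\beta$ is never a zero of $g(X,t_0)$. If instead $\beta\neq a_i$ for all $i$, the leading coefficient is nonzero, so $g(\beta,t_0)=0$ has the unique solution
\[
t_0=\frac{-1}{N^6\prod_{i=1}^r(\beta-a_i)^6}.
\]
Either way, there is at most one $t_0\in\mathbb{C}$ making $\beta$ a zero of $g(X,t_0)$, which is exactly the claim from the first paragraph.

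There is no genuine obstacle here; the only points requiring a word of care are the verification that $N\neq 0$ (so that the leading coefficient does not vanish unexpectedly) and the separate treatment of the degenerate case $\beta\in\{a_1,\dots,a_r\}$, where $g(\beta,\cdot)$ is a nonzero constant rather than a non‑constant linear function. I would then conclude by taking the finite union of the (at most $\#\mathcal{B}$) excluded values of $t_0$ as the finite set outside of which the proposition holds.
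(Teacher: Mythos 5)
Your proof is correct and is essentially the paper's argument, which consists of the single observation that each $\beta\in\mathcal{B}$ is a root of $g(X,t_0)$ for (at most) one value of $t_0$. You are in fact slightly more careful than the paper, which asserts ``exactly one value'' and overlooks the degenerate case $\beta\in\{a_1,\dots,a_r\}$ that you handle explicitly.
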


\begin{proof}
Every $\alpha\in \mathcal{B}$ is a root of $g(X,t_0)$ for exactly one value $t_0\in \mathbb{C}$.
\end{proof}

\begin{lem}\label{lem:sepf}
Let $g(X,t)$ as above. For all but finitely many primes $p$, 
\[
(h(X)-1)g(X,p)+1
\]
is separable.
\end{lem}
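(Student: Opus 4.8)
The plan is to show that the discriminant of $F_p(X) := (h(X)-1)g(X,p)+1$, viewed as a polynomial in $X$ whose coefficients depend on $p$, is nonzero for all but finitely many primes $p$. The key observation is that $F_p(X)$ is a specialization at $t = p$ of the two-variable polynomial
\[
F(X,t) := (h(X)-1)g(X,t)+1 \in \Q[X,t],
\]
which has $X$-degree $n + 6r = 12r+3$ (since $\deg h = n = 6r+3$ and $\deg_X g(X,t) = 6r$, with leading coefficient in $t$ equal to the leading coefficient of $h$ times $t N^6$, which is a nonzero element of $\Q[t]$). So for all but finitely many $t_0 \in \Q$, the specialization $F(X,t_0)$ has the full expected degree $12r+3$. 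It therefore suffices to prove that $F(X,t)$ is separable as a polynomial in $X$ over the field $\Q(t)$: if so, its $X$-discriminant $\Delta(t) \in \Q[t]$ is a nonzero polynomial, hence vanishes at only finitely many $t_0$, and in particular at only finitely many primes $p$. Combined with the finitely many primes where the leading coefficient degenerates, this gives the claim.

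To prove $F(X,t)$ is separable over $\Q(t)$, I would suppose $\alpha \in \overline{\Q(t)}$ is a multiple root of $F(X,t)$, so $F(\alpha,t) = 0$ and $\partial_X F(\alpha,t) = 0$. From $F(\alpha,t)=0$ we get $(h(\alpha)-1)g(\alpha,t) = -1$, so in particular $h(\alpha) \neq 1$ and $g(\alpha,t) \neq 0$; we can solve $g(\alpha,t) = -1/(h(\alpha)-1)$. Now differentiate $F$ in $X$:
\[
\partial_X F(\alpha,t) = h'(\alpha) g(\alpha,t) + (h(\alpha)-1)\,\partial_X g(\alpha,t) = 0.
\]
Writing $g(X,t) = t N^6 b(X)^6 + 1$ where $b(X) = \prod_{i=1}^r (X - a_i)$, we have $\partial_X g(X,t) = 6 t N^6 b(X)^5 b'(X)$. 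Since $\alpha \in \overline{\Q(t)}$ and the coefficients of $h$ are in $\Q$, the relation $F(\alpha,t)=0$ forces $g(\alpha,t)$ to be a rational function of $\alpha$ alone with $\Q$-coefficients; but $g(\alpha,t)$ depends genuinely on $t$ unless $b(\alpha)^6 = 0$. So either $\alpha$ is algebraic over $\Q$ (independent of $t$) — which I would handle by noting that then $t N^6 b(\alpha)^6 = g(\alpha,t)-1 = -1/(h(\alpha)-1) - 1$ forces $b(\alpha) = 0$, i.e. $\alpha = a_i$ for some $i$, but then $g(a_i, t) = 1$ and $F(a_i,t) = (h(a_i)-1)\cdot 1 + 1 = b_i^d - 1 + 1 = b_i^d \neq 0$ by acceptability, a contradiction — or $b(\alpha) = 0$, giving the same contradiction. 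This shows $F(X,t)$ has no multiple root over $\overline{\Q(t)}$, hence is separable over $\Q(t)$.

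The main obstacle I anticipate is the bookkeeping in the case analysis over $\overline{\Q(t)}$: making rigorous the dichotomy "$\alpha$ is independent of $t$, or else $b(\alpha)=0$". The cleanest route is probably to work directly with the discriminant. Since $g(X,t)$ is linear in $t$ with leading coefficient $N^6 b(X)^6$, one can compute the resultant $\mathrm{Res}_X(F(X,t), \partial_X F(X,t))$ as an explicit polynomial in $t$ and exhibit a single value $t = t_1 \in \Q$ (for instance a value making the roots of $h(X)-1$ and of $g(X,t_1)$ all distinct and avoiding the $a_i$, which is possible since $h - 1$ is a fixed separable-or-not polynomial and $g(X,t_1)$ has $6r$ roots varying with $t_1$) at which $F(X,t_1)$ is separable; a single non-vanishing specialization of $\Delta(t)$ proves $\Delta(t) \not\equiv 0$. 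I would also remark that the factor $N^6 = \left(\prod_{v_p(m)>0} p\right)^6$ in $g$ plays no role in the separability argument itself — it is present for the later integrality and local analysis — so it can be ignored here, treating $N$ as a fixed nonzero constant.
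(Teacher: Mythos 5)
Your overall strategy --- prove that $F(X,t)=(h(X)-1)g(X,t)+1$ is separable as a polynomial in $X$ over $\Q(t)$, then specialize via the nonvanishing of the $X$-discriminant at all but finitely many $t_0$ --- is sound and genuinely different from the paper's proof, which works $p$-adically: there one observes that the Newton polygon of $F(X,p)$ with respect to $v_p$ has two segments of slopes $0$ and $1/6r$, so $F(X,p)=F_1F_2$ with $F_1\equiv h\pmod p$ separable and $F_2$ irreducible of pure slope, the two factors being coprime. However, your execution of the generic-separability step has a real gap. The dichotomy ``either $\alpha$ is algebraic over $\Q$ or $b(\alpha)=0$'' is false, and your argument never actually uses the hypothesis $\partial_XF(\alpha,t)=0$: as written it derives a contradiction from $F(\alpha,t)=0$ alone, which would show that $F(X,t)$ has \emph{no} roots in $\overline{\Q(t)}$ --- absurd for a nonconstant polynomial. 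A typical root $\alpha$ of $F(X,t)$ is transcendental over $\Q$ with $b(\alpha)\neq 0$ (no root equals any $a_i$, since $F(a_i,t)=b_i^d\neq 0$); the relation $F(\alpha,t)=0$ then merely expresses $t$ as an element of $\Q(\alpha)$, which is no contradiction. Your fallback (exhibit one $t_1$ making the roots of $h-1$ and of $g(X,t_1)$ distinct) also does not establish separability of $F(X,t_1)$, whose roots are unrelated to those of $h-1$ and $g$.

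The repair is short and keeps your route intact. Write $F(X,t)=h(X)+tH(X)$ with $H(X)=N^6b(X)^6(h(X)-1)$. If $\alpha\in\overline{\Q(t)}$ is a multiple root, then $h(\alpha)+tH(\alpha)=0$ and $h'(\alpha)+tH'(\alpha)=0$. First, $H(\alpha)\neq 0$: otherwise $F(\alpha,t)=h(\alpha)$ would force $h(\alpha)=0$, but the roots of $H$ are the $a_i$ (where $h=b_i^d\neq0$) and the roots of $h-1$ (where $h=1$), none of which are roots of $h$. Eliminating $t$ from the two equations gives $h'(\alpha)H(\alpha)-h(\alpha)H'(\alpha)=0$; this polynomial is not identically zero because $h/H$ is nonconstant ($\deg H=12r+3>\deg h$), so $\alpha$ is algebraic over $\Q$, and then $t=-h(\alpha)/H(\alpha)$ is algebraic over $\Q$, a contradiction. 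With this fix your proof works. Note, though, that the paper later cites the \emph{proof} of this lemma (in Proposition \ref{prop:val}) for the additional fact that $v_\ell(\alpha_n)=0$ for the leading coefficient $\alpha_n$ of $h$; your argument does not produce that byproduct, so it would have to be supplied separately.
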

\begin{proof}

First, because $h(X)$ is irreducible, its discriminant is non-zero and has finite support. For those primes $p$ not dividing the discriminant, $h(X)$ is separable modulo $p$.   
Now rewrite the polynomial $(h(X)-1)g(X,t)+1$ as
\begin{equation}\label{eqn:polygh}
(h(X)-1) g(X,t)+1 \; = \; h(X) \; + \; t H(X) \; = \; \sum_{i=0}^{6r+n} c_i X^i.
\end{equation}
where $H(X)$ is some polynomial in $\Q[X]$. For all but finitely many values of $p$, we can specialise $g(X,t)$ at $t=p$ so that the valuations of the coefficients are:
\[
v_p(c_i)=\begin{cases}
0 & 0 \le i \le n,\\
1 & n+1 \le i \le 6r+n.
\end{cases}
\]
Thus the Newton polygon of the polynomial \eqref{eqn:polygh}
consists of two segments of lengths $n$ and $6r$
and slopes $0$ and $1/6r$ respectively.
By the theory of Newton polygons, \cite{Gouvea}*{7.2.3} 
\eqref{eqn:polygh} will factor as the product of two irreducible polynomials in $\Z_{p}[X]$:
\[
    (h(X)-1)g(X,p)+1 \; =\; F_1(X)F_2(X)
\]
 corresponding to the two line segments of the Newton polygon. 
Here $F_1(X)$ has degree $n$ and its roots have valuation $0$,
and $F_2(X)$ has degree $6r$ and its roots have valuation $1/6r$.
In particular, $F_1$, $F_2$ are coprime.
Moreover, $F_1 \equiv h(X) \mod{p}$. However $h(X)$
is separable modulo $p$, and thus $F_1$ is separable in $\Z_p[X]$, for all but finitely many $p$.
The second factor $F_2$ is irreducible because it is pure of slope
$1/{6r}$. This means their product $F_1(X)F_2(X)$ is separable for all but finitely many primes $p$.
\end{proof} 

With all of this in mind, we will choose a prime $\ell$ that satisfies a list of conditions.

\begin{prop}\label{prop:choice}
Given a finite set $\mathcal{B}\subset \mathbb{C}$, we can choose a prime $\ell$ such that
\begin{enumerate}[(i)]
    \item the zeroes of $g(X,\ell)$ lie outside of $\mathcal{B}$,
    \item $(h(X)-1)g(X,\ell)+1$ is separable,
    \item $(\ell,N)=1$,
    \item $\ell \equiv 5\mod 12$.
\end{enumerate} 
\end{prop}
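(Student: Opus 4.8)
The plan is to treat this as a bookkeeping exercise: conditions (i)--(iii) each rule out only finitely many primes, whereas condition (iv) is met by infinitely many, so their intersection is not just nonempty but infinite, and in particular contains a prime $\ell$ of the required form.

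First I would invoke Proposition~\ref{prop:mini}: there are only finitely many $t_0\in\mathbb{C}$, and hence only finitely many primes $p$, for which some zero of $g(X,p)$ lies in the finite set $\mathcal{B}$; we set these aside. Next, by Lemma~\ref{lem:sepf}, the polynomial $(h(X)-1)g(X,p)+1$ is separable for all but finitely many primes $p$; we set aside the exceptional ones as well. Third, since $N=\prod_{v_p(m)>0}p$ is a product of finitely many primes, the condition $(\ell,N)=1$ excludes only those finitely many prime divisors of $N$. Altogether, conditions (i)--(iii) forbid only a finite set of primes.

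Finally, because $\gcd(5,12)=1$, Dirichlet's theorem on primes in arithmetic progressions furnishes infinitely many primes $\ell\equiv 5 \pmod{12}$. Removing from this infinite set the finitely many primes excluded by (i)--(iii) leaves infinitely many primes, and any one of them satisfies all four conditions simultaneously; we pick such an $\ell$.

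I do not expect a genuine obstacle here, since the real work has already been done in Proposition~\ref{prop:mini} and Lemma~\ref{lem:sepf}. The only points that warrant a moment's attention are that the "all but finitely many $t_0\in\mathbb{C}$" conclusion of Proposition~\ref{prop:mini} does restrict to a cofinite set of \emph{primes} (it does, a finite subset of $\mathbb{C}$ containing only finitely many primes), and that the residue class $5\bmod 12$ is coprime to the modulus $12$ so that Dirichlet's theorem genuinely applies.
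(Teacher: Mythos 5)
Your argument is exactly the paper's: the paper's proof is a one-line appeal to Proposition~\ref{prop:mini}, Lemma~\ref{lem:sepf}, and Dirichlet's theorem, and you have simply spelled out the finite-exclusion bookkeeping that this appeal implicitly relies on. The proposal is correct and takes the same route.
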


\begin{proof}
    This is always possible by Proposition \ref{prop:mini}, Lemma \ref{lem:sepf} and Dirichlet's theorem. 
\end{proof}

\section{Proof of Theorem \ref{thm:theone}}

Let $\ell$ be a prime chosen as in Proposition \ref{prop:choice}. Let
\[
g(X):=g(X,\ell).
\]
Let
\[
f_{S,\mathcal{B}}(X):=g(X)(h(X)-1)g(X)+1),
\]
and let
\[
    C: y^d=f_{S,\mathcal{B}}(X).
\]
We note that 
\[
    f_{S,\mathcal{B}}(a_i)=h(a_i)=b_i^d
\]
as required. Moreover, 
\[
d=\deg f_{S,\mathcal{B}}(X)=2\deg g(X)+\deg h(X)=3(6r+1).
\]
Note that the leading coefficient of $f_{S,\mathcal{B}}(X)$ is such that
\[
v_\ell(a_d)=2,
\]
therefore there are no rational points at infinity.

\begin{prop}\label{prop:smooth}
The curve $C: y^d=f_{S,\mathcal{B}}(X)
$
is smooth.
\end{prop}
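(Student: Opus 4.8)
The plan is to verify smoothness of the superelliptic curve $C: y^d = f_{S,\mathcal{B}}(X)$ by checking that the affine model is smooth and that there are no singularities at infinity. For a curve of the form $y^d = f(x)$ with $f$ separable, smoothness of the affine part is the standard fact: at a point $(x_0,y_0)$, the partial derivatives are $-f'(x_0)$ and $d y_0^{d-1}$; if both vanish then either $y_0 = 0$, forcing $f(x_0) = 0$ and $f'(x_0) = 0$ contradicting separability, or $x_0$ is a critical point of $f$ lying on the curve, again giving $f(x_0) = f'(x_0) = 0$, the same contradiction. So the key input is that $f_{S,\mathcal{B}}(X)$ is separable. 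First I would record that separability: we have $f_{S,\mathcal{B}}(X) = g(X)\bigl((h(X)-1)g(X)+1\bigr)$, and by Lemma~\ref{lem:sepf} the factor $(h(X)-1)g(X)+1$ is separable; moreover $g(X) = \ell N^6 \prod_i (X-a_i)^6 + 1$ is separable as well (it is $F_2$-type, pure of a single slope after translating, or one checks its discriminant directly — alternatively note $g$ is the "$h-1$ times $g$ plus $1$" construction in miniature). One also needs $g$ and $(h-1)g+1$ to be coprime, which holds because any common root $\alpha$ satisfies $g(\alpha)=0$ and hence $(h(\alpha)-1)\cdot 0 + 1 = 1 \neq 0$.

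Next I would address the points at infinity. The excerpt already observes that $v_\ell(a_d) = 2$ where $a_d$ is the leading coefficient, and concludes there are no \emph{rational} points at infinity; but for smoothness one must check the actual projective (or rather, the correct smooth) model has no singular points over $\overline{\Q}$ lying above $x = \infty$. The honest statement is that the smooth projective model of $y^d = f(x)$ with $\deg f = d$ and $\gcd(d,d) = d$ has a single point at infinity which is smooth precisely when... here one must be slightly careful: for $y^d = f(x)$ with $\deg f = d$, the plane projective closure is singular at infinity, but the normalization is smooth, and what "the superelliptic curve $C$" means (per the definition in Section 2, "the smooth projective curve in $\PP^2$ associated to the affine model") is exactly this smooth model. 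So the claim to prove reduces entirely to: the affine curve is smooth, which as above follows from separability of $f_{S,\mathcal{B}}$.

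So the steps, in order, are: (1) show $g(X)$ is separable; (2) show $g(X)$ and $(h(X)-1)g(X)+1$ are coprime; (3) combine with Lemma~\ref{lem:sepf} to conclude $f_{S,\mathcal{B}}(X)$ is separable; (4) deduce the affine curve $y^d = f_{S,\mathcal{B}}(X)$ is smooth by the partial-derivative argument; (5) invoke the definition of the superelliptic curve as the smooth projective model to conclude. The main obstacle — really the only non-bookkeeping point — is step (1), the separability of $g(X) = \ell N^6\prod_{i=1}^r (X-a_i)^6 + 1$; I would handle it either by the same Newton-polygon reasoning used in Lemma~\ref{lem:sepf} (its reduction structure at a prime dividing $\ell N$) or, more cheaply, by noting that the prime $\ell$ was chosen (Proposition~\ref{prop:choice}(i),(ii)) so that $g(X,\ell)$ behaves well, and that a common root of $g$ and $g'$ would be a common root of $\prod(X-a_i)^6$ and its derivative that simultaneously satisfies $\ell N^6\prod(X-a_i)^6 = -1$, which is impossible since the left side vanishes there. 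Everything else is routine.
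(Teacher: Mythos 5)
Your proposal follows the same route as the paper: reduce to separability of $f_{S,\mathcal{B}}$, note that the two factors $g(X)$ and $(h(X)-1)g(X)+1$ are coprime, quote Lemma~\ref{lem:sepf} for the second factor, and get separability of $g$ from its Newton polygon at $\ell$ (a single segment of slope $1/6r$, so $g$ is irreducible over $\Q_\ell$, hence separable); your extra discussion of the point at infinity and the smooth projective model is a welcome precision that the paper leaves implicit in its definition of a superelliptic curve. One caution: your ``cheaper'' alternative for step (1) is flawed as stated. Writing $P(X)=\prod_i(X-a_i)$, a common root $\alpha$ of $g$ and $g'$ satisfies $P(\alpha)^5P'(\alpha)=0$ together with $\ell N^6P(\alpha)^6=-1$, which forces $P(\alpha)\neq 0$ and hence $P'(\alpha)=0$; so $\alpha$ is a critical point of $P$, not a common root of $P^6$ and its derivative, and the impossibility you assert does not follow --- a critical value of $P^6$ could in principle equal $-1/(\ell N^6)$. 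The Newton-polygon argument (the paper's) is the one to keep.
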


\begin{proof}
We verify that $f_{S,\mathcal{B}}(X)$ is separable. Note that $g(X)$ and $(h(X)-1)g(X)+1$ are coprime in $\Q[X]$, so it is enough to show that each
is separable. We have shown the latter is separable in Lemma \ref{lem:sepf}, and $g(X)$ is irreducible by examining the Newton polygon with respect to $v_{\ell}$.     
\end{proof}

Now that we have shown the curve is smooth, we want to show its set of rational points is exactly $S$. So, we want to show that if $f_{S,\mathcal{B}}(x)=y^d$ for some $x,y\in \Q$, then $(x,y)\in S$.

\begin{prop}\label{prop:val}
If $(x,y)\in C$, then $g(x)=\beta^{3}$ for some $\beta\in \Q^*$.
\end{prop}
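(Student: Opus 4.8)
The plan is to extract from the identity $y^d = f_{S,\mathcal{B}}(x) = g(x)\bigl((h(x)-1)g(x)+1\bigr)$ the fact that $g(x)$ must be a perfect cube in $\Q^*$. The key observation is that, since $d = 3(6r+1)$ is divisible by $3$, writing $\delta = 6r+1$ so that $d = 3\delta$, we have $y^d = (y^\delta)^3$, so the product $g(x)\bigl((h(x)-1)g(x)+1\bigr)$ is a cube. First I would argue that the two factors $g(x)$ and $(h(x)-1)g(x)+1$ are coprime as elements of $\Q^*$ in a suitable sense: they are coprime as polynomials in $\Q[X]$ by Proposition~\ref{prop:smooth}, but more importantly one should use the arithmetic relation $g(x)\cdot\bigl((h(x)-1)\bigr) = \bigl((h(x)-1)g(x)+1\bigr) - 1$, so that any prime $\cp$ of $\Q$ dividing both $g(x)$ and $(h(x)-1)g(x)+1$ would have to divide $1$, which is impossible. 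Hence for every place $v$ of $\Q$, $\ord_v(g(x))$ and $\ord_v\bigl((h(x)-1)g(x)+1\bigr)$ cannot both be nonzero.

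Given this coprimality, the standard unique factorisation argument applies: if a product of two coprime nonzero rationals is a perfect cube, then each factor is, up to sign, a perfect cube; since $-1 = (-1)^3$ is itself a cube, each factor is outright a cube in $\Q^*$. Concretely, for each prime $\cp$, the $\cp$-adic valuation of $g(x)$ equals the $\cp$-adic valuation of the cube $y^d/\bigl((h(x)-1)g(x)+1\bigr)$ wherever $\cp \nmid (h(x)-1)g(x)+1$, hence is a multiple of $3$; and wherever $\cp \mid (h(x)-1)g(x)+1$, coprimality forces $\ord_\cp(g(x)) = 0$, again a multiple of $3$. Therefore $\ord_\cp(g(x)) \equiv 0 \pmod 3$ for all $\cp$, so $g(x) = \pm\beta^3$ for some $\beta \in \Q^*$; absorbing the sign gives $g(x) = \beta^3$. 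One should also note $g(x) \neq 0$: since $g(X)$ is separable (indeed irreducible over $\Q_\ell$ by the Newton polygon argument) and of degree $6r > 0$ with no rational roots, and in any case if $g(x) = 0$ then $y^d = 0$ forces $y = 0$, so the point $(x,0)$ would be a solution of $0 = f_{S,\mathcal{B}}(x) = g(x)(\cdots) $, consistent, but then we would need $x$ to be a rational root of $g$, which we should rule out — in fact $g(X) = \ell N^6\prod(X-a_i)^6 + 1$ has no rational roots because the Newton polygon with respect to $v_\ell$ shows it is irreducible of degree $6r \geq 12$.

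The main obstacle is being careful with the signs and with the degenerate cases: I expect the bookkeeping of "perfect cube up to sign, and the sign is itself a cube" to be the only subtle point, together with confirming $g(x) \neq 0$ so that $\beta \in \Q^*$ rather than merely $\Q$. Everything else is a routine valuation-by-valuation check exploiting that $\gcd$ of the two factors divides $1$. I would present the argument prime-by-prime to make the coprimality completely transparent, and then conclude $g(x) = \beta^3$ with $\beta \in \Q^*$, which is exactly what sets up the map from $C$ to the twisted Mordell curve $E_\ell$ in the next step.
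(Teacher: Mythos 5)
Your overall strategy --- show that for every prime $\cp$ the two factors $g(x)$ and $(h(x)-1)g(x)+1$ do not both have nonzero valuation, then read off $\ord_\cp(g(x))\equiv 0 \pmod 3$ from $y^d$ being a cube --- is the right shape, but the coprimality claim as you state it is false, and this is exactly where the real work of the paper's proof lives. The identity $g(x)\cdot(h(x)-1)=\bigl((h(x)-1)g(x)+1\bigr)-1$ only shows that the two factors cannot both have \emph{positive} valuation \emph{provided} you already know $\ord_\cp(h(x)-1)\ge 0$; and it says nothing at primes where the valuations are negative. Both failure modes occur. First, $h\in\Q[X]$ has denominators (only $mh\in\Z[X]$) and $x$ is an arbitrary rational, so $\ord_\cp(h(x)-1)\ge 0$ must be earned: the paper shows $\ord_\cp(g(x))>0$ forces $\ord_\cp(x)\ge 0$ and $\cp\nmid m$ (this is the purpose of the factor $N^6$ in $g$), and only then does $mh\in\Z[X]$ give $\ord_\cp(h(x))\ge 0$. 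Second, and more seriously, at any prime $\cp$ dividing the denominator of $x$ one has $\ord_\cp(g(x))=6\ord_\cp(N)+6r\ord_\cp(x)<0$ and typically $\ord_\cp\bigl((h(x)-1)g(x)+1\bigr)<0$ as well, so your ``not both nonzero'' dichotomy breaks down and the unique-factorisation step does not apply. The paper handles these primes by a direct computation using the sixth powers built into $g$: the displayed valuation is manifestly $\equiv 0\pmod 3$. Your proposal never invokes this design feature of $g$ for the valuation argument, so it cannot recover the conclusion at denominator primes.

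There is also a separate issue at $\cp=\ell$: if $\ord_\ell(x)<0$ then $\ord_\ell(g(x))=1+6r\,\ord_\ell(x)\equiv 1\pmod 3$, which is \emph{not} a multiple of $3$, so no amount of coprimality can save this case. The paper instead shows this case cannot occur, by computing that the total valuation of $f_{S,\mathcal{B}}(x)$ would be $2+d\,\ord_\ell(x)$, which is incompatible with $\ord_\ell(y^d)\equiv 0\pmod d$. Your treatment of the sign ($-1$ is a cube) and of $g(x)\neq 0$ is fine (indeed $g(x)\ge 1$ for real $x$), but the proof needs the three-way case analysis --- denominator primes of $x$, primes with $\ord_\cp(g(x))>0$, and the prime $\ell$ --- rather than a single gcd argument.
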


\begin{proof}
Suppose 
\[
f_{S,\mathcal{B}}(x)=y^d
\]
for some $x,y\in \Q$. First, let $p$ be a prime such that $p\neq \ell$. Assume $v_p(x)<0$. As the $a_i$ are all integral, we have $\sum v_p(x-a_i)=rv_p(x)$. As $0\leq v_p(\ell N) \leq 1$ and $r>1$, this means $v_p(g(x))<0$. Then 
\begin{align*}
    v_p(g(x))=v_p(\ell)+6v_p( N)+6\sum_{i=0}^r v_p(x-a_i)=6v_p(N)+6rv_p(x)&\equiv 0 \mod 3.
\end{align*}

Now, let $p$ be a prime such that $v_p(g(x))\geq  0$. By the above, we must have that $v_p(x)\geq 0$. Moreover, if $v_p(m)>0$ then $v_p(g(x))=0$ by our choice of $N$.
Now, assume $v_p(g(x))>0$. We have that $v_p(h(x))=v_p(mh(x))$. Since $v_p(x)\geq 0$ and $mh(X)\in \Z[X]$, we have that $v_p(mh(x))\geq 0$. From this we conclude that 
\[
v_p((h(x)-1)g(x)+1)=0.
\]
Therefore $v_p(g(x))\equiv 0\mod d$, and since $3\mid d$ we are done.

Now, consider $v_{\ell}(g(x))$. If $v_{\ell}(x)>0$, then $v_{\ell}(g(x))=0$. If $v_{\ell}(x)<0$, then
\[
v_l(h(x)-1)=nv_{\ell}(x),
\]
because $v_{\ell}(\alpha_n)=0$, where $\alpha_n$ is the leading coefficient of $h(X)$ (this follows from the proof of Lemma \ref{lem:sepf}). As before we have that $v_{\ell}(g(x))=1+6rv_{\ell}(x)$, and so 
\[
v_\ell(g(x)((h(x)-1)g(x)+1))=2+dv_{\ell}(x)\equiv 0 \mod d,
\]
which is not possible. So, $v_{\ell}(g(x))=0$.
\end{proof}

\begin{lem}\label{lem:zero2}
The polynomial $f_{S,\mathcal{B}}(X)$ has no rational roots.
\end{lem}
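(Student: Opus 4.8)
The plan is to observe that a rational root of $f_{S,\mathcal{B}}$ forces a rational point on $C$ lying on the line $y=0$, and then to eliminate it by the same mechanism that eliminates all other rational points, namely Proposition \ref{prop:val} together with the triviality of the relevant twist of the Mordell curve. So suppose, for contradiction, that $f_{S,\mathcal{B}}(x_0)=0$ for some $x_0\in\Q$. Then $(x_0,0)$ satisfies $y^d=f_{S,\mathcal{B}}(x)$, so $(x_0,0)\in C(\Q)$, and Proposition \ref{prop:val} gives $g(x_0)=\beta^3$ for some $\beta\in\Q^*$; in particular $g(x_0)\ne 0$.

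Next I would feed $\beta$ into the twisted Mordell curve. Since by definition $g(x_0)-1=\ell N^6\prod_{i=1}^r(x_0-a_i)^6=\ell\bigl(N^3\prod_{i=1}^r(x_0-a_i)^3\bigr)^2$, setting $\gamma:=N^3\prod_{i=1}^r(x_0-a_i)^3\in\Q$ we get $\beta^3-1=\ell\gamma^2$, i.e. $(\beta,\gamma)\in E_\ell(\Q)$ where $E_\ell:\ \ell Y^2=X^3-1$. Because $\ell\equiv 5\bmod 12$ (Proposition \ref{prop:choice}(iv)), Theorem \ref{lem:twist} says $E_\ell(\Q)=\{(1,0),\infty\}$; as $(\beta,\gamma)$ is an affine point with $\beta\ne 0$, we must have $(\beta,\gamma)=(1,0)$, so $\beta=1$ and $g(x_0)=1$. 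Hence $\prod_{i=1}^r(x_0-a_i)=0$, so $x_0=a_j$ for some $j$. But then, as recorded in Section \ref{section:dimensionn}'s setup (and since $g(a_j)=1$), $f_{S,\mathcal{B}}(x_0)=f_{S,\mathcal{B}}(a_j)=h(a_j)=b_j^{d}$, so our assumption $f_{S,\mathcal{B}}(x_0)=0$ yields $b_j^{d}=0$, i.e. $b_j=0$, contradicting condition (ii) in the definition of an acceptable set. This proves the lemma.

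The only delicate point is the first step: Proposition \ref{prop:val} is proved under the running hypothesis $f_{S,\mathcal{B}}(x)=y^d$, and one should check its proof is insensitive to taking $y=0$. The value of $y$ enters only through the congruence $v_p(f_{S,\mathcal{B}}(x))\equiv 0\bmod d$, and in the case $y=0$ the sub-case $v_p(g(x_0))>0$ simply cannot arise: there $g(x_0)=\ell N^6\prod_i(x_0-a_i)^6+1\ge 1>0$ as a real number, so $v_p(g(x_0))$ is finite, contradicting $v_p(f_{S,\mathcal{B}}(x_0))=+\infty$. If one prefers to bypass even this, rerunning that valuation bookkeeping with $y=0$ shows every $v_p(g(x_0))$ is $\le 0$ and $\equiv 0\bmod 3$, while $g(x_0)\ge 1$; these already force $g(x_0)=1$, after which the argument concludes exactly as above without invoking $E_\ell$ at all.
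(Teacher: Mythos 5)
Your proof is correct, but it takes a different route from the paper's. The paper's own proof of Lemma \ref{lem:zero2} is a short self-contained valuation argument: since $g(x)\ge 1$ as a real number, a rational root must kill the second factor, giving $(h(x)-1)g(x)=-1$; the identity $v_p(h(x)-1)+v_p(g(x))=0$ together with the sub-case analysis from Proposition \ref{prop:val} forces $v_p(g(x))\le 0$ for all $p$, hence $h(x)-1\in\Z$, and then positivity of $g$ pins down $g(x)=1$, so $x=a_i$ and $f_{S,\mathcal B}(a_i)=b_i^d\ne 0$. You instead reuse the full mechanism of the main proof (Proposition \ref{prop:val} plus Theorem \ref{lem:twist}) at the point $(x_0,0)$. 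That is legitimate and not circular, but you correctly put your finger on the one delicate point: the reason the paper quarantines the root case into a separate lemma is precisely that the proof of Proposition \ref{prop:val} invokes $v_p(f_{S,\mathcal B}(x))=d\,v_p(y)\equiv 0\bmod d$, which is meaningless for $y=0$. Your patch works, though your phrasing is slightly loose: finiteness of $v_p(g(x_0))$ alone does not contradict $v_p(f_{S,\mathcal B}(x_0))=+\infty$; what you need (and what that sub-case of the proposition actually supplies) is that the \emph{second} factor $(h(x_0)-1)g(x_0)+1$ is shown to be a $p$-adic unit there, so the product cannot vanish. Your closing fallback --- $v_p(g(x_0))\le 0$ for all $p$ together with $g(x_0)\ge 1$ forces $g(x_0)=1$ --- essentially recovers the paper's argument and is arguably the cleaner of your two options, since it avoids the elliptic curve entirely. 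Either way the conclusion $x_0=a_j$, $b_j^d=0$ contradicting acceptability is exactly right.
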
 
\begin{proof}[Proof of Lemma \ref{lem:zero2}]

Note that $g(x)=0$ is not possible since $g(x)>0$, so we must have that $(h(x)-1)g(x)+1=0$, and
\[
    v_p(h(x)-1)+v_p(g(x))=0.
\]
If $v_p(g(x))>0$, then as in the proof of Proposition \ref{prop:val}, we have $v_p(h(x)-1)\geq 0$. So for all $p$ we have $v_p(g)\leq 0$ and $v_p(x)\leq 0$. This means $v_p(h(x)-1)\geq 0$ for all $p$, and therefore $h(x)-1\in \Z$. Then, as $(h(x)-1)g(x)=-1$, we must have $g(x)=1$, so $x=a_i$ for some $i$. Since $S$ is acceptable, and $f_{S,\mathcal{B}}(a_i)=b_i^d$, we reach a contradiction.

\end{proof}

\begin{proof}[Proof of Theorem \ref{thm:theone}]

Recall that we supposed that $f_{S,\mathcal{B}}(x)=y^d$ for some rational $y$.
To complete the proof of Theorem~\ref{thm:theone}, we want to show that $x=a_i$ for some $i$.
Lemma~\ref{lem:zero2} establishes this if $f_{S,\mathcal{B}}(x)=0$.
Thus we may suppose $f_{S,\mathcal{B}}(x) \ne 0$. From Proposition \ref{prop:val}, we see that $g(x)=\beta^3$ for some $\beta\in \Q^*$. So, we have a point on the twist of the Mordell curve
\begin{equation}\label{eqn:thetwo}
    E_{\ell}:\ell Y^2=X^3-1,
\end{equation}
with $X\neq 0$, and $\ell \equiv 5 \mod 12$. By Theorem \ref{lem:twist}, $E_{\ell}(\Q)=\{(1,0),\infty\}$, and we conclude that $(X,Y)=(1,0)$. Therefore $g(x)=1$ and $x=a_i$ for some $i$. Then 
\[
f_{S,\mathcal{B}}(x)=f_{S,\mathcal{B}}(a_i)=b_i^d,
\]
and since $d$ is odd, we conclude that $y=b_i$. 

\end{proof}

\section{Proof of Theorem \ref{thm:main}}\label{section:dimensionn}

We have shown that given an acceptable set $S\subset \mathbb{A}^2(\Q)$, and a finite subset $\mathcal{B} \subset \mathbb{C}$,
 there exists a separable polynomial $f_{S,\mathcal{B}}(X)\in \Q[X]$ of some degree $d \ge 4$, not vanishing at any point in $\mathcal{B}$,
such that its associated superelliptic curve has rational point set $C(\Q)=S$.
In this section, we show how we can generalise this to sets in $\PP^2(\Q)$, and glue together the superelliptic curves to generalise the result to finite sets in $\PP^n(\Q)$.

\begin{remark}\label{remark:poonen}
If, in the statement of Theorem \ref{thm:main}, we had not asked that $C$ be a projective curve, then one could simply use Bertini's theorem \cite{hartshorne} to find a smooth curve $C$ of large genus passing through all points of $S$. Then, by taking hyperplane sections, remove from $C$ the finitely many points outside of $S$.
\end{remark}

\begin{lem}\label{lem:canchange}
    Let $S=\{P_1,\dots ,P_r\}\subseteq \mathbb{A}^n(\Q)$ be a finite set of points, 
which are distinct and non-zero. Then there exists a change of coordinate matrix $A\in \GL_n(\Q)$ such that
    \begin{enumerate}[(a)]
        \item for all $i$, $j$, we have $x_i(AP_j)\neq 0$,
        \item for all $i$, $j$, $k$, $\ell$ with $i \ne k$ or $j \ne \ell$, we have $x_i(AP_j)\neq x_k(AP_{\ell})$.
    \end{enumerate}
where the $x_i$ are the usual coordinate functions.
\end{lem}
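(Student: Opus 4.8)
The plan is to find $A$ as a "generic" matrix, by writing down the finitely many polynomial conditions that $A$ must avoid and checking that they do not exhaust $\GL_n(\Q)$. Concretely, regard the entries of $A=(a_{st})$ as indeterminates. For each point $P_j$, the coordinate $x_i(AP_j)$ is a linear form $\ell_{ij}(a_{st})$ in these indeterminates, with coefficients the coordinates of $P_j$; since $P_j\neq 0$, the form $\ell_{ij}$ is not identically zero. Condition (a) asks that $\ell_{ij}\neq 0$ for all $i,j$, and condition (b) asks that $\ell_{ij}-\ell_{k\ell}\neq 0$ whenever $(i,j)\neq(k,\ell)$; this latter difference is again a linear form, and I must check it is not identically zero. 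This is where the hypothesis that the $P_j$ are \emph{distinct} enters: if $j\neq\ell$ then $P_j\neq P_\ell$, so the vector of coefficients of $\ell_{ij}$ (built from $P_j$) differs from that of $\ell_{i\ell}$ (built from $P_\ell$) — wait, one must be slightly careful, since $\ell_{ij}$ uses the $i$-th row of $A$ while $\ell_{i\ell}$ also uses the $i$-th row, so their difference is the $i$-th row paired with $P_j-P_\ell\neq 0$, hence a nonzero linear form. If instead $i\neq k$ (with $j=\ell$ allowed), then $\ell_{ij}$ involves row $i$ and $\ell_{kj}$ involves row $k$, which are disjoint sets of variables, so their difference is visibly nonzero as a polynomial. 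Thus every forbidden condition is the vanishing locus of a nonzero polynomial in the $a_{st}$.

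Next I would assemble these into a single obstruction. Let $F(a_{st})$ be the product of $\det(a_{st})$ together with all the linear forms $\ell_{ij}$ and all the differences $\ell_{ij}-\ell_{k\ell}$ appearing above; this is a nonzero polynomial in $n^2$ variables. Since $\Q$ is infinite, a nonzero polynomial over $\Q$ does not vanish identically on $\Q^{n^2}$ (standard: induct on the number of variables, using that a nonzero one-variable polynomial over an infinite field has only finitely many roots). Choose $(a_{st})\in\Q^{n^2}$ with $F(a_{st})\neq 0$ and set $A=(a_{st})$. Then $\det A\neq 0$ so $A\in\GL_n(\Q)$, and all the linear forms and their differences are nonzero at this point, which is exactly conditions (a) and (b).

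The routine points to spell out in the final write-up are: the precise identification $x_i(AP_j)=\sum_t a_{it}\,x_t(P_j)$ so that one sees these as linear forms in the $a_{st}$; the case analysis above showing each relevant linear form or difference of linear forms is not the zero polynomial (the only substantive case being $i=k$, $j\neq\ell$, where distinctness of the points is used); and the infinitude-of-$\Q$ argument that a nonzero polynomial has a non-root. The main (mild) obstacle is simply the bookkeeping in the case $i=k$, $j\neq\ell$: one must verify that $x_i(AP_j)$ and $x_i(AP_\ell)$, though built from the same row of $A$, are genuinely different linear forms, which follows because $P_j-P_\ell$ is a nonzero vector and pairing a fresh indeterminate row against a nonzero vector yields a nonzero form. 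No deeper input — no Hilbert irreducibility, no Bertini — is needed here; this is pure linear algebra over an infinite field.
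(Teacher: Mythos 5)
Your proof is correct and is essentially the paper's argument: the paper likewise expresses the forbidden conditions as a finite union of proper Zariski-closed subsets of the space of $n\times n$ matrices (the locus $\det A=0$, the loci $x_i(AP_j)=0$, and the loci $x_i(AP_j)=x_k(AP_\ell)$) and concludes by Zariski density of rational points, which is the same fact as your ``a nonzero polynomial over an infinite field has a non-root.'' Your write-up is in fact slightly more complete, since you verify explicitly (using distinctness and non-vanishing of the $P_j$) that each of these loci is proper, a point the paper leaves as ``easy to see.''
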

\begin{proof}
We write $M_{n \times n}$ for the affine space of dimension $n^2$ considered as the space of $n \times n$ matrices.
Consider the following subvarieties of $M_{n \times n}$:
\begin{itemize}
\item $U=\{ A \in M_{n \times n} : \det(A) =0\}$,
\item $V_{i,j}=\{ A \in M_{n \times n} : x_i(AP_j)=0\}$ with $1 \le i \le n$ and $1 \le j \le r$,
\item $W_{i,j,k,\ell}=\{ A \in M_{n \times n} : x_i(A P_j) = x_k(A P_{\ell})\}$ with $1 \le i,  k \le n$ and $1 \le j , \ell \le r$
and either $i \ne k$ or $j \ne \ell$.
\end{itemize}
It is easy to see that these are all proper subsets of $M_{n \times n}$ which are closed in the Zariski topology.
As the rational points on $M_{n \times n}$ are Zariski dense, it follows that there is some $A \in M_{n \times n}(\Q)$
that does not lie on any of the $U$, $V_{i,j}$, $W_{i,j,k,\ell}$. 

\end{proof}

Using Lemma \ref{lem:canchange}, we are able to deduce Theorem \ref{thm:main} from Theorem \ref{thm:theone}.

\begin{proof}[Proof of Theorem \ref{thm:main}]
Let $S$ be the finite set of rational points in $\PP^n(\Q)$. By finiteness
of this set, we can always find a hyperplane on which none of the points lie.
After a suitable change of coordinates, all the points lie in an affine chart.
We may assume from Lemma~\ref{lem:canchange} that the set $S=\{P_1,\dotsc,P_r\}$ satisfies the conclusion of Lemma \ref{lem:canchange}, scaling the points if necessary to ensure integrality.

For $2 \le j \le n$, let
\[
S_j=\{(x_1(P_1),x_j(P_1)) \, , \, (x_1(P_2),x_j(P_2)) \, , \,  \dotsc \, , \, (x_1(P_r),x_j(P_r))\} \; \subset \; \mathbb{A}^2(\Q).
\]
Observe that these are acceptable sets.
We construct sequences of finite subsets of $\mathbb{C}$:
\[
\cB_2 \subset \cB_3 \subset \cdots \subset \cB_n
\]
and a sequence of separable polynomials $f_2,f_3,\dotsc,f_n \in \Q[x_1]$ as follows. We start by letting $\cB_2=\emptyset$.
We let $f_2=f_{S_2,\cB_2}$ constructed as in Theorem~\ref{thm:theone}. For the inductive step of the construction,
let $\cB_j$ be the union of $\cB_{j-1}$ and the set of complex roots of $f_{j-1}$, and we let $f_j=f_{S_j,\cB_j}$.
Let $C \subset \PP^n$ be the projective closure of the affine variety 
\[
C \; = \; V\left( \, x_2^{d} - f_2(x_1) \, , \, x_3^{d} - f_3(x_1) \, , \, \cdots \, , \, x_n^{d}-f_n(x_1) \, \right) \; \subset \mathbb{A}^n.
\]
Note that $C$ has function field 
\[
\Q(C) = \Q\left( \sqrt[d]{f_2(X)},\sqrt[d]{f_3(X)}, \dotsc,\sqrt[d]{f_n(X)} \right)
\]
which is a finite extension of $\Q(X)$. Thus $C$ is an absolutely irreducible curve.
It follows from Theorem~\ref{thm:theone} that $C(\Q)=\{P_1,P_2,\dots,P_r\}$. Moreover, we note
that the construction ensures that $f_1,\dotsc,f_n$ have pairwise distinct complex roots. It follows from this,
using the Jacobian criterion,
that the curve $C$ is smooth.

\end{proof}

\section*{Acknowledgements} The author would like to thank Samir Siksek for his continuous support, as well as Ross Paterson and Michael Stoll for helpful conversations. The author would also like to thank Bjorn Poonen for pointing out Remark \ref{remark:poonen}.

\subsection*{Data availability} Data sharing is not applicable to this article as no datasets were generated or
analysed in the current paper.

\bibliographystyle{plain} 
\bibliography{References} 

\end{document}